\definecolor{pipeblue}{RGB}{120,180,235}
\newtheorem{theorem}{Theorem}[section]
\newtheorem{lemma}[theorem]{Lemma}
\theoremstyle{definition}
\newtheorem{definition}[theorem]{Definition}
\newtheorem{example}[theorem]{Example}
\theoremstyle{remark}
\numberwithin{equation}{section}
\newcommand{\elbowarc}{%
  \begin{tikzpicture}[baseline=-0.5ex, scale=0.3]
    \draw (1.4,0) arc[start angle=90, end angle=180, radius=0.7];
    \draw (0,0) arc[start angle=270, end angle=360, radius=0.7];
  \end{tikzpicture}%
}
\newcommand{\cross}{%
  \begin{tikzpicture}[baseline=0.5ex, scale=0.3]
    \draw (0.7,0)--(0.7,1.4);
    \draw (0,0.7) -- (1.4,0.7);
  \end{tikzpicture}%
}
\begin{document}
\setcounter{page}{1}
\centerline{}

\centerline{}


\title[Schubert polynomials and elementary symmetric products]{Schubert polynomials and elementary symmetric products}

\author[Oma Makhija]{Oma Makhija}
 
\begin{abstract}
    We study the factorization of Schubert polynomials into elementary symmetric polynomials. We conjecture that this occurs when the permutation corresponding to the Schubert polynomial does not contain the patterns $1432$, $1423$, $4132$, and $3142$. We prove one direction of this and provide progress towards the second direction, including obstructions arising from permutations with a rectangular array of crosses in their bottom pipe dream. This characterization helps us identify new ties between elementary symmetric polynomials and Schubert polynomials. It contributes to the broader understanding of pattern avoidance phenomena in algebraic combinatorics.

\end{abstract}
\maketitle


\section{Introduction}

Schubert polynomials are important polynomials, arising both from geometry and algebra. They have many interesting relationships with other families of polynomials - for example, they contain \textit{Schur polynomials}, \textit{elementary symmetric polynomials}, \textit{homogeneous symmetric polynomials}, and many other important families. Schubert polynomials have particularly interesting relationships with elementary symmetric polynomials; for example, the \textit{Pieri rule} describes a nice combinatorial formula for the product of a Schubert polynomial with an elementary symmetric polynomial. In this writeup, we further explore this relationship between Schubert polynomials and elementary symmetric polynomials by asking when a Schubert polynomial factorizes as a product of any collection of elementary symmetric polynomials.

Many important properties of Schubert polynomials can be characterized in terms of \textit{pattern avoidance}, so we therefore aim for a pattern avoidance-type answer to our question. For example, classically it is known that $\mathfrak{S}_w$ is a single monomial if and only if $w$ avoids $132$; more recently, it has been shown that:
\begin{itemize}
    \item A Schubert polynomial $\mathfrak{S}_w$ is a single standard elementary monomial if and only if $w$ avoids the patterns $312$ and $1432$. \cite{woodruff2025singleSEM}
    \item The principal specialization of the Schubert polynomial at $w$ is bounded below by $1 + p_{132}(w) + p_{1432}(w)$, where $p_u(w)$ denotes the number of occurrences of the pattern $u$ in $w$. \cite{gao2021principalSpecializations}
    \item For any permutation $w$ that avoids a certain set of 13 patterns of length 5 and 6, the Schubert polynomial $\mathfrak{S}_w$ can be expressed as the sum of Schubert polynomials of smaller permutations. \cite{hatam2021determinantalSEM}
\end{itemize}
In this section, we will cover elementary symmetric polynomials, pattern avoidance, Lehmer codes, pipe dreams and Schubert polynomials then state our conjecture. In the second section, we will prove one direction of the conjecture, and in the third section, we will discuss progress towards the second direction of the conjecture.

\subsection{Elementary symmetric polynomials}
For $m\ge 0$ and $k\ge 0$ the \emph{elementary symmetric polynomial} $e_k(x_1,\dots,x_m)$ is
\[
e_k(x_1,\dots,x_m) \;=\; \sum_{1\le i_1<\cdots<i_k\le m} x_{i_1}x_{i_2}\cdots x_{i_k},
\]

Every term in $e_k(x_1 \dots x_m)$ thus has degree $k$, and the multiplicity of every variable in each term is $\leq 1$.

An example of such a polynomial would be $$e_2(x_1,x_2,x_3,x_4) = x_1x_2+x_1x_3+x_1x_4+x_2x_3+x_2x_4+x_3x_4.$$

Note that, while elementary symmetric polynomials are symmetric in the variables $x_1 \dots x_n$, products of elementary symmetric polynomials in different numbers of variables are no longer necessarily symmetric.

In this paper we are interested in when a given (non-symmetric) Schubert polynomial can be written as a product of such elementary symmetric polynomials (possibly in different sets of initial variables).  Concretely, we will ask when a Schubert polynomial $\mathfrak S_w$ equals a product
\[
\mathfrak S_w \;=\; \prod_{t=1}^r e_{b_t}(x_1,\dots,x_{a_t})
\]
for some integers $a_t,b_t\ge1$.
\subsection{Pattern avoidance for permutations}
A permutation $w\in S_n$ \emph{contains} the pattern $\sigma\in S_k$ if there exist indices $1\le i_1<i_2<\cdots<i_k\le n$ so that the relative order of the entries $w_{i_1},\dots,w_{i_k}$ is the same as that of $\sigma_1,\dots,\sigma_k$.  If no such indices exist we say $w$ \emph{avoids} the pattern $\sigma$.

\begin{example}
   $25413$ contains a $231$ pattern - for example, in positions $1, 2$ and $4$. However, it does not contain a $4321$ pattern, since there is no length $4$ decreasing subsequence. 
\end{example}
Permutation patterns originally arose in computer science, in relation to stack-sortable permutations. But they have since spread to many areas of mathematics, and even other areas like chemistry. In the context of Schubert polynomials, many other important properties of Schubert polynomials are characterized by pattern avoidance. For example, $S_w$ is a single monomial if and only if w avoids $132$.
\subsection{Lehmer codes}

The \emph{Lehmer code} of a permutation $w=w_1w_2\cdots w_n$ is the sequence
\[
L(w) = (\ell_1,\dots,\ell_n),\qquad 
\ell_i \;=\; \big|\{\, j>i : w_j<w_i \,\}\big|.
\]
Each $\ell_i$ records how many entries to the right of position $i$ are smaller than $w_i$. The Lehmer code uniquely determines $w$; in fact, taking the Lehmer code of a permutation defines a bijection between permutations in $S_n$ and tuples $(c_{n-1}, c_{n-2} \dots c_0)$ such that $0 \leq c_i \leq i$.

\begin{example}
    The identity has Lehmer code $(0, 0, 0 \dots 0)$. If $w = 23541$, then $L(w) = (1, 1, 2, 1, 0)$. 
\end{example}

The \textit{length} of $w$ (that is, the number of inversions of $w$) is equal to the sum  of the entries of the Lehmer code by definition.

A descent in a permutation (where $w_i > w_{i+1}$) corresponds to a descent in its Lehmer code, meaning $\ell_i > \ell_{i+1}$. We’ll later use simple inequalities like $\ell_{i+1} - \ell_i \le 1$ to express pattern-avoidance conditions in a compact, combinatorial way.
\subsection{Pipe dreams}

\begin{definition}
    A pipe dream for $w$ is a certain filling of an $n \times n$ grid with two kinds of tiles - crossings (\cross) and elbows (\elbowarc) - such that `following a wire' from row $i$ takes you to column $w(i)$. Furthermore, we require that any two wires in the pipe dream cross at most once. 
\end{definition}

(In other words, pipe dreams are certain ways of laying out wiring diagrams/reduced words for $w$ geometrically). 

For the purposes of visual simplicity we will denote the elbows as blank squares and only write in the crosses like : (new figure in new format) should parallel existing figures.

\begin{example}
    The five pipe dreams for $w = 1432$ are depicted in figure 2. The change from the first to the second pipe dream is a non-simple ladder move.     
\end{example}

There is a simple algorithm for generating all pipe dreams for $w$. Namely: the unique left-justified pipe dream for $w$ is called the \textit{bottom pipe dream}. 

\begin{definition}
    The bottom pipe dream for $w$ has $l_i$ crossings in row $i$, followed only by uncrossings, where $l$ is the Lehmer code of $w$. 
\end{definition}

Starting from the bottom pipe dream, we can generate every pipe dream for $w$ via \textit{ladder moves}:

\textbf{Ladder moves.} A simple ladder move is a local transformation that slides a crossing one step up and to the right (along a “ladder”), swapping its position with an adjacent elbow. This preserves the overall wiring and the permutation represented by the diagram. Figure 1 shows  a more complex ladder move which involves a cross ``jumping" above row(s) of crosses in between.

\begin{figure}\label{figure:lader}
    \centering
    \begin{tikzpicture}[line join=round,line cap=round,>=stealth]
    \begin{scope}
    \draw (0,0) grid (2,-5);
    \definecolor{pipeblue}{RGB}{120,180,235}
    \draw[pipeblue, line width=1.6pt] (0,-0.5) to[out=0,in=-90] (0.5,0);
    \draw[pipeblue, line width = 1.6pt] (0,-1.5)--(2,-1.5);
    \draw[pipeblue, line width = 1.6pt] (0,-2.5)--  (2,-2.5);
    \draw[pipeblue, line width = 1.6pt] (0,-3.5)--(2,-3.5);
        \draw[pipeblue, line width = 1.6pt] (0.5,-5)--(0.5,-1)to[out=90,in=180] (1,-0.5)to[out=0,in=-90] (1.5,0);
    \draw[pipeblue, line width = 1.6pt] (0, -4.5) --(1, -4.5) to[out=0,in=-90] (1.5,-4) -- (1.5,-1)to[out=90,in=180] (2,-.5);
    \draw[pipeblue, line width=1.6pt] (1.5,-5) to[out=90,in=180] (2,-4.5);
    \end{scope}
    \begin{scope} [xshift = 3cm]
    \draw (0,0) grid (2,-5);
    \definecolor{pipeblue}{RGB}{120,180,235}
    \draw[pipeblue, line width=1.6pt] (0,-0.5) to[out=0,in=-90] (0.5,0);
    \draw[pipeblue, line width = 1.6pt] (0,-1.5)--(2,-1.5);
    \draw[pipeblue, line width = 1.6pt] (0,-2.5)--(2,-2.5);
    \draw[pipeblue, line width = 1.6pt] (0,-3.5)--(2,-3.5);
    \draw[pipeblue, line width = 1.6pt] (0,-4.5)to[out=0,in=-90](0.5,-4)--(0.5,-1) to[out=90,in=180] (1,-0.5) --(2,-0.5);
    \draw[pipeblue, line width = 1.6pt] (0.5, -5) to[out=90,in=180] (1, -4.5) to[out=0,in=-90] (1.5,-4) -- (1.5,-1)-- (1.5,0);
    \draw[pipeblue, line width=1.6pt] (1.5,-5) to[out=90,in=180] (2,-4.5);
    \end{scope}
    \end{tikzpicture}
    \caption{Ladder Moves}
    \label{fig:placeholder}
\end{figure}
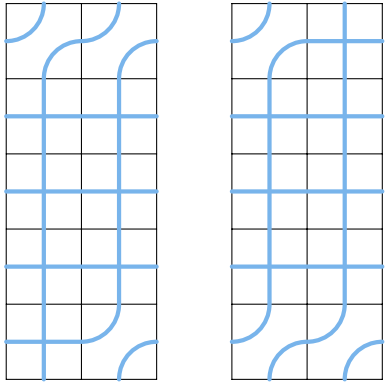

\begin{example}
   In figure 2, the bottom pipe dream is to the left, and the pipe dream on the right is obtained by applying one order $1$ ladder move to the bottom pipe dream.
\end{example}

The bottom pipe dream and the \textit{top pipe dream} will both be important to us. The top pipe dream is the unique top-justified pipe dream for $w$. In Figure 2, the top pipe dream is the bottom left pipe dream.

\begin{figure}\label{figure:1432pipedreams}
    \centering
    \begin{tikzpicture}[line join=round,line cap=round,>=stealth]
    
\begin{scope} [xshift = 0cm]
  \draw (0,0) grid (4,-1);
  \draw (0,-1) grid (3,-2);
  \draw (0,-2) grid (2,-3);
  \draw (0,-3) grid (1,-4);
  \node at (0.5,  0.35) {1};
  \node at (1.5,  0.35) {2};
  \node at (2.5,  0.35) {3};
  \node at (3.5,  0.35) {4};
  \node[left] at (-0.3,-0.5) {1};
  \node[left] at (-0.3,-1.5) {2};
  \node[left] at (-0.3,-2.5) {3};
  \node[left] at (-0.3,-3.5) {4};
  \draw[pipeblue, line width=1.6pt]
    (0,-3.5) to[out=0,in=-90] (0.5,-3)--(0.5, -1)
    to[out=90,in=180] (1,-0.5) to[out=0,in=-90] (1.5, 0);
  \draw[pipeblue, line width=1.6pt]
    (0,-2.5) -- (1,-2.5) to[out=0,in=-90] (1.5,-2)-- (1.5,-1)
    to[out=90,in=180] (2,-.5)  to[out=0,in=-90] (2.5, 0);
  \draw[pipeblue, line width=1.6pt]
    (0,-1.5) -- (2, -1.5) to[out=0,in=-90] (2.5,-1)
    to[out=90,in=180] (3,-0.5)to[out=0,in=-90] (3.5,0);
  \draw[pipeblue, line width=1.6pt] (0,-0.5) to[out=0,in=-90] (0.5,0);
\end{scope}

\begin{scope}[xshift=-5.5cm]
  \draw (0,0) grid (4,-1);
  \draw (0,-1) grid (3,-2);
  \draw (0,-2) grid (2,-3);
  \draw (0,-3) grid (1,-4);
  \node at (0.5,  0.35) {1};
  \node at (1.5,  0.35) {2};
  \node at (2.5,  0.35) {3};
  \node at (3.5,  0.35) {4};
  \node[left] at (-0.3,-0.5) {1};
  \node[left] at (-0.3,-1.5) {2};
  \node[left] at (-0.3,-2.5) {3};
  \node[left] at (-0.3,-3.5) {4};
  \draw[pipeblue, line width=1.6pt]
    (0,-3.5) to[out=0,in=-90] (0.5,-3) to[out=90,in=180](1, -2.5)
    to[out=0,in=-90] (1.5,-2) -- (1.5, 0);
  \draw[pipeblue, line width=1.6pt]
    (0,-2.5) to[out=0,in=-90] (0.5,-2)-- (0.5,-1)
    to[out=90,in=180] (1,-.5) -- (2, -.5) to[out=0,in=-90] (2.5, 0);
  \draw[pipeblue, line width=1.6pt]
    (0,-1.5) -- (2, -1.5) to[out=0,in=-90] (2.5,-1)
    to[out=90,in=180] (3,-0.5)to[out=0,in=-90] (3.5,0);
  \draw[pipeblue, line width=1.6pt] (0,-0.5) to[out=0,in=-90] (0.5,0);
\end{scope}

\begin{scope} [xshift = 5.5cm]
  \draw (0,0) grid (4,-1);
  \draw (0,-1) grid (3,-2);
  \draw (0,-2) grid (2,-3);
  \draw (0,-3) grid (1,-4);
\node at (0.5,  0.35) {1};
\node at (1.5,  0.35) {2};
\node at (2.5,  0.35) {3};
\node at (3.5,  0.35) {4};
\node[left] at (-0.3,-0.5) {1};
\node[left] at (-0.3,-1.5) {2};
\node[left] at (-0.3,-2.5) {3};
\node[left] at (-0.3,-3.5) {4};
\definecolor{pipeblue}{RGB}{120,180,235}
\draw[pipeblue, line width=1.6pt]
  (0,-3.5) to[out=0,in=-90] (0.5,-3)--(0.5, -1) to[out=90,in=180] (1,-0.5) to[out=0,in=-90] (1.5, 0);
\draw[pipeblue, line width=1.6pt]
  (0,-2.5) to (1,-2.5) to[out=0,in=-90] (1.5,-2) to [out=90,in=180] (2,-1.5)  to[out=0,in=-90] (2.5, -1) --(2.5,0) ;
\draw[pipeblue, line width=1.6pt]
  (0,-1.5) -- (1, -1.5) to[out=0,in=-90] (1.5,-1)to[out=90,in=180] (2,-0.5)--(3,-0.5)to[out=0,in=-90] (3.5,0);
\draw[pipeblue, line width=1.6pt] (0,-0.5) to[out=0,in=-90] (0.5,0);
\end{scope}

\begin{scope} [xshift = 1.cm, yshift = -5cm]
  \draw (0,0) grid (4,-1);
  \draw (0,-1) grid (3,-2);
  \draw (0,-2) grid (2,-3);
  \draw (0,-3) grid (1,-4);
\node at (0.5,  0.35) {1};
\node at (1.5,  0.35) {2};
\node at (2.5,  0.35) {3};
\node at (3.5,  0.35) {4};
\node[left] at (-0.3,-0.5) {1};
\node[left] at (-0.3,-1.5) {2};
\node[left] at (-0.3,-2.5) {3};
\node[left] at (-0.3,-3.5) {4};
\draw[pipeblue, line width=1.6pt]
  (0,-3.5) to[out=0,in=-90] (0.5,-3)--(0.5, -2) to[out=90,in=180] (1,-1.5) to[out=0, in=-90] (1.5,-1) -- (1.5, 0);
\draw[pipeblue, line width=1.6pt]
  (0,-2.5) to (1,-2.5) to[out=0,in=-90] (1.5,-2) to [out=90,in=180] (2,-1.5)  to[out=0,in=-90] (2.5, -1) --(2.5,0) ;
\draw[pipeblue, line width=1.6pt]
  (0,-1.5)  to[out=0,in=-90] (0.5,-1) to[out=90,in=180] (1,-0.5) --(2,-0.5) --(3,-0.5)to[out=0,in=-90] (3.5,0);
\draw[pipeblue, line width=1.6pt] (0,-0.5) to[out=0,in=-90] (0.5,0);
\end{scope}

\begin{scope} [yshift = -5cm, xshift = -4cm]
  \draw (0,0) grid (4,-1);
  \draw (0,-1) grid (3,-2);
  \draw (0,-2) grid (2,-3);
  \draw (0,-3) grid (1,-4);
\node at (0.5,  0.35) {1};
\node at (1.5,  0.35) {2};
\node at (2.5,  0.35) {3};
\node at (3.5,  0.35) {4};
\node[left] at (-0.3,-0.5) {1};
\node[left] at (-0.3,-1.5) {2};
\node[left] at (-0.3,-2.5) {3};
\node[left] at (-0.3,-3.5) {4};
\draw[pipeblue, line width=1.6pt]
  (0,-3.5) to[out=0,in=-90] (0.5,-3) to[out=90,in=180](1, -2.5) to[out=0,in=-90] (1.5,-2) -- (1.5, 0);
\draw[pipeblue, line width=1.6pt]
  (0,-2.5) to[out=0,in=-90] (0.5,-2) to[out=90,in=180] (1,-1.5) -- (2,-1.5)  to[out=0,in=-90] (2.5, -1) --(2.5,0) ;
\draw[pipeblue, line width=1.6pt]
  (0,-1.5)  to[out=0,in=-90] (0.5,-1) to[out=90,in=180] (1,-0.5) --(2,-0.5) --(3,-0.5)to[out=0,in=-90] (3.5,0);
\draw[pipeblue, line width=1.6pt] (0,-0.5) to[out=0,in=-90] (0.5,0);
\end{scope}

\end{tikzpicture}
    \caption{Two Pipe Dreams of the permutation 1432}
    \label{fig:placeholder}
\end{figure}
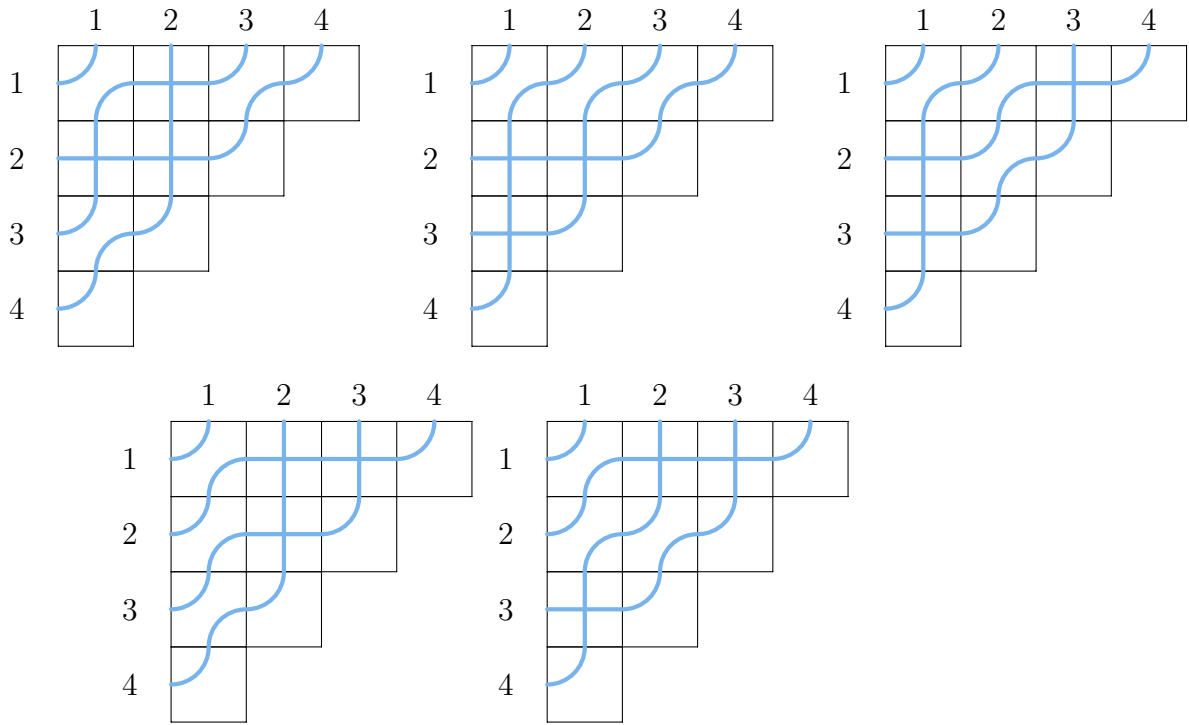
\subsection{Schubert polynomials}

The \textit{weight} of a pipe dream is a monomial, computed as the product over all crossings $C$ of $x_i$, where crossing $C$ is in row $i$. 

\begin{definition}
    The Schubert polynomial $\mathfrak{S}_w$ is defined as 
    \[\mathfrak{S}_w = \sum_{\text{Pipe dreams } P \textit{ for } w} \text{weight}(P)\]
\end{definition}

\begin{example}
    If $w = 1432$, then 
    \[\mathfrak{S}_w = x_2^2x_3 + x_1x_2x_3+x_1^2x_3+x_1^2x_2+x_1x_2^2\]
    Each monomial is contributed by one of the five pipe dreams. 
\end{example}

Notice that $\mathfrak{S}_w$ is always \textit{homogeneous}, since every pipe dream for $w$ contains the same number of crossings, equal to the number of inversions of $w$. The degree of $\mathfrak{S}_w$ is therefore precisely this inversion number. Moreover, the leading monomial in \textit{reverse lexicographic order} corresponds to the Lehmer code of $w$, as it arises from the bottom (left-justified) pipe dream.

\subsection{Why Schubert polynomials?}

From a geometric perspective, Schubert polynomials encode intersection-theoretic information about degeneracy loci, making them indispensable objects in modern Schubert calculus.

From an algebraic perspective, they form a distinguished basis of the polynomial ring, compatible with divided difference operators and the weak Bruhat order, and provide a combinatorial model for the structure constants of the cohomology ring of the flag variety. 
\subsection{Statement of the conjecture}
A Schubert polynomial $\mathfrak{S}_w$ factors as a product of elementary symmetric polynomials if and only if the permutation $w$ avoids the four patterns $1432$, $1423$, $4132$, and $3142$.

That is, for such permutations $w$, there exist integers $a_t, b_t \ge 1$ so that
\[
\mathfrak{S}_w = \prod_{t=1}^r e_{b_t}(x_1, \dots, x_{a_t}).
\]
This conjecture proposes a new pattern-avoidance characterization within Schubert calculus, describing exactly when the rich combinatorics of $\mathfrak{S}_w$ simplifies into a purely symmetric factorization.

\section{Proof of the first direction}

\begin{lemma} The permutation $w$ avoids $1423$ and $1432$ implies its Lehmer code never increases by more than $1$
\end{lemma}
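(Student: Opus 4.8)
The plan is to prove the contrapositive: if the Lehmer code $L(w)=(\ell_1,\dots,\ell_n)$ satisfies $\ell_{i+1}\ge \ell_i+2$ for some $i$, then $w$ contains $1423$ or $1432$. The idea is to compare the sets of positions that contribute to $\ell_i$ and to $\ell_{i+1}$, and the comparison splits naturally according to whether the pair $(i,i+1)$ is an ascent or a descent of $w$; the point where index $i+1$ itself does or does not get counted in $\ell_i$ is exactly what distinguishes the two cases.

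First I would dispose of the descent case. If $w_i>w_{i+1}$, then the index $i+1$ contributes $1$ to $\ell_i$, so $\ell_i = 1 + |\{\,j>i+1 : w_j<w_i\,\}|$, while $\ell_{i+1} = |\{\,j>i+1 : w_j<w_{i+1}\,\}|$. Since $w_{i+1}<w_i$, the latter set is contained in the former, hence $\ell_{i+1}\le \ell_i-1<\ell_i$. Thus a violation $\ell_{i+1}\ge\ell_i+2$ can only occur at an ascent, $w_i<w_{i+1}$.

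In the ascent case $w_i<w_{i+1}$, index $i+1$ contributes nothing to $\ell_i$, so $\ell_i = |\{\,j>i+1 : w_j<w_i\,\}|$ and $\ell_{i+1} = |\{\,j>i+1 : w_j<w_{i+1}\,\}|$. The first set is contained in the second (if $w_j<w_i<w_{i+1}$ then $w_j<w_{i+1}$), so subtracting gives $\ell_{i+1}-\ell_i = |\{\,j>i+1 : w_i<w_j<w_{i+1}\,\}|$. Assuming this is at least $2$, I can pick indices $i+1<j_1<j_2$ with $w_i<w_{j_1},w_{j_2}<w_{i+1}$.

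Finally I would read off the pattern from the four positions $i<i+1<j_1<j_2$: among $w_i,w_{i+1},w_{j_1},w_{j_2}$, the entry $w_i$ is the smallest and $w_{i+1}$ is the largest, while $w_{j_1}$ and $w_{j_2}$ both lie strictly between, so the pattern is $14ab$ with $\{a,b\}=\{2,3\}$ — that is, $1423$ if $w_{j_1}<w_{j_2}$ and $1432$ if $w_{j_1}>w_{j_2}$. Either way $w$ contains a forbidden pattern, which proves the contrapositive. I do not expect a real obstacle here; the only thing that needs care is the bookkeeping of whether index $i+1$ is counted in $\ell_i$, and that is precisely the distinction between the descent and ascent cases above.
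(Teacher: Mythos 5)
Your proposal is correct and follows essentially the same route as the paper: both arguments show that a jump $\ell_{i+1}-\ell_i\ge 2$ forces an ascent $w_i<w_{i+1}$ and two later positions whose values lie strictly between $w_i$ and $w_{i+1}$, producing a $1423$ or $1432$ pattern. Your version is slightly more careful in its bookkeeping (the explicit descent/ascent split and the identity $\ell_{i+1}-\ell_i=|\{j>i+1: w_i<w_j<w_{i+1}\}|$), but it is the same argument.
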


\begin{proof}

Suppose for contradiction that $w$ exists such that $L_{i+1} - L_i \ge 2$ and it avoids $1432$ and $1423$. 
Then there exist at least two distinct indices $p,q>i+1$ with 
$w_p, w_q < w_{i+1}$ but $w_p, w_q > w_i$; otherwise the counts would differ by at most $1$.

Because $w_i < w_{i+1}$ (otherwise $L_{i+1} \le L_i$), 
the relative order of the four entries $w_i, w_{i+1}, w_p, w_q$ forms either the pattern 
$1423$ or $1432$: $w_i$ is the smallest, $w_{i+1}$ the largest, 
and $w_p, w_q$ are the two middle values ordered increasingly or decreasingly. 
Thus $w$ contains one of the forbidden patterns $1423$ or $1432$.

This contradicts the avoidance assumption. 
Hence no such pair $p,q$ can exist, and we must have $L_{i+1} - L_i \le 1$ for every~$i$.
\end{proof}

\begin{definition}[Column in a pipe dream]
A \emph{column} of a pipe dream is a maximal contiguous set of crossings in a single column such that:
\begin{itemize}
    \item The crossings are consecutive (no elbows in between),
    \item The set is as tall as possible without being interrupted by an elbow or the edge of the grid.
\end{itemize}
Equivalently, a column can be thought of as a vertical "chunk" of crossings in the pipe dream that will contribute together to a factor in a product of elementary symmetric polynomials.
\end{definition}

\begin{lemma}
If the permutation $w$ avoids the patterns $3142$ and $4132$ that implies that in every pipe dream for $w$, the following holds:  

\noindent\textbf{Diagonal separation property.} For any two columns $c<c'$ of the pipe dream, the northeast-going diagonal (slope $+1$) through the \emph{top} cross of the lower-numbered column $c$ lies strictly below (by at least one lattice unit) the northeast-going diagonal through the \emph{bottom} cross of the higher-numbered column $c'$.
\end{lemma}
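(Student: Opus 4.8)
The plan is to prove the contrapositive: from a pipe dream $P$ of $w$ that violates the diagonal separation property, I will produce four positions of $w$ whose images are in the relative order $3142$ or $4132$. The first task is to translate the violation into a concrete local configuration of crosses. A violation cannot involve two columns of $P$ lying in the same grid column, since the elbow that must sit between them already separates their diagonals by a lattice unit; so the offending columns $c<c'$ lie in distinct grid columns, and the violation says exactly that the northeast diagonal through the top cross $X$ of $c$ and the one through the bottom cross $Y$ of $c'$ fail to be separated as required. Together with the meaning of ``$c$ precedes $c'$'', this constrains $X$ and $Y$ to a definite relative position; I will argue one representative case, the other being symmetric. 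The two structural facts that power the rest of the proof are that the cell of $P$ on the far side of $X$ from the rest of column $c$ is an elbow, and likewise for $Y$ and column $c'$.

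The next step is to trace pipes. The pipe through $X$ that travels vertically there is bent sideways by the forced elbow at $X$, and the pipe through $Y$ that travels vertically there is seen, by reading its route backwards through the forced elbow at $Y$, to have arrived horizontally along the row adjacent to $Y$. Call these two pipes $\alpha$ and $\gamma$, and let $\beta,\delta$ be the pipes crossing them transversally at $X$ and at $Y$. I now invoke the standard description of a crossing in a reduced pipe dream --- the two pipes meeting there are pipes $p<p'$ with $w(p)>w(p')$, and the one travelling vertically is $p'$ --- to record, for each of the two crosses, an inversion of $w$ together with inequalities relating the starting rows and finishing columns of $\alpha,\beta,\gamma,\delta$ to the four integers describing the positions of $X$ and $Y$. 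These local inequalities are tied together across the two crosses precisely by the diagonal inequality that encodes the violation. In the generic situation, where $\alpha,\beta,\gamma,\delta$ are four distinct pipes, sorting their starting rows and reading off the corresponding images exhibits an occurrence of $3142$ or $4132$.

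The step I expect to be the real obstacle is the degenerate situation in which these four pipes are not all distinct --- typically a single pipe that is vertical at both $X$ and $Y$, having snaked from one up to the other, which is exactly what already happens in the smallest examples. There the two crosses yield only three distinct pipes, which form a $231$ or a $321$ pattern; to promote this to $3142$ or $4132$ one must supply a fourth position whose image is smaller than all three and whose row lies strictly between two prescribed rows. Such a position is extracted from the column structure of $P$ in the block of cells bounded by $X$ and $Y$: one shows there is a cross in the grid column of $X$, on the far side of $Y$, whose vertical pipe both finishes in a small column and starts in the required range. Carrying this out rigorously requires enumerating the ways the four pipes can coincide --- one combination is impossible because two pipes cannot cross both at $X$ and at $Y$, while the remaining collision patterns must each be treated, possibly with further subcases according to whether $c$ or $c'$ has a single cross --- and verifying in every case that the substitute pipe lands where it is needed. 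This bookkeeping, rather than any single clever observation, is the delicate part; once it is complete, $w$ contains $3142$ or $4132$, which is the contrapositive of the lemma.
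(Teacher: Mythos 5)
Your write-up is an outline rather than a proof: both of its pivotal claims are announced but never actually established. In the ``generic'' case you assert that once $\alpha,\beta,\gamma,\delta$ are distinct, sorting their starting rows and reading off their images exhibits a $3142$ or $4132$. But at that point all you have recorded is that $(\alpha,\beta)$ and $(\gamma,\delta)$ are inversions of $w$ (the standard crossing fact), and two inversions alone are compatible with interleavings such as $2143$ or $3412$, which avoid both forbidden patterns. The entire content of the lemma is that the geometry of the violation --- the elbow above the top cross $X$ of $c$, the elbow below the bottom cross $Y$ of $c'$, and the failed diagonal inequality relating the positions of $X$ and $Y$ --- forces the particular interleaving of starting rows and exit columns that yields $3142$ or $4132$. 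You say only that the local inequalities are ``tied together'' by the diagonal inequality, without deriving any of the needed order relations; indeed you never pin down concretely which relative position of $X$ and $Y$ a violation means (you defer this to ``one representative case, the other being symmetric'') and then argue neither case. As it stands, the step from the two recorded inversions to the pattern is exactly the lemma, and it is missing.

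The degenerate case, which you yourself flag as the real obstacle, is likewise left as a to-do list: the collision patterns ``must each be treated,'' the substitute cross must be ``verified in every case,'' but none of these verifications appear. This is not minor bookkeeping: the coincidence of the two vertical pipes is the situation arising already in the smallest relevant permutations, and producing a fourth, smaller value in the prescribed row range is precisely where the paper's own argument does its work --- it finds, below the columns in the bottom pipe dream, a row where the number of crosses drops and follows the pipe entering there through the last cross of the higher row to get the fourth entry of the pattern at positions $i<k<j<l$. Your plan differs from the paper in scope (the paper argues only on the bottom pipe dream, while you attempt an arbitrary pipe dream, which is more faithful to the statement as written), but since neither your generic nor your degenerate case is carried out, the proposal does not yet prove the lemma.
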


\begin{proof} 
Let $w$ be a permutation avoiding $3142$ and $4132$, and let $P$ be the bottom pipe dream of $w$.
Assume, for the sake of contradiction, that the diagonal separation property does not hold.
Then there exist columns $c < c'$ such that the northeast-going diagonal through the top cross of column $c$ is not strictly below the northeast-going diagonal through the bottom cross of column $c'$.

Denote the row containing the bottom cross of column $c'$ as $i$ such that it corresponds to  $w_i$ in the permutation. Denote the row containing the top cross of column $c$ as $j$ such that it corresponds to  $w_j$ in the permutation. 

In order for these to be separate columns we also know that there must exist some $k$ such that $i<k<j$. 

There are a nonzero amount of crosses in row $j$. Eventually after row $j$ there will be a row with less crosses (by the definition of a pipe dream as the bottom row must have zero crosses). Let this row be row $l$. Since the vertical segment of the cross(es) in row $j$ need to have a pipe, the pipe that starts at row $l$ will go through the last cross in row $j$. Therefore the (whats the word for it) of the permutation that row $l$ corresponds (denote as $w_l$) must be less than $w_i$ and $w_j$. It will however be greater than $w_k$ as it is in a later row and (more thorough explanation here).

Therefore we have that there exist $w_i>w_k>w_l$ and $w_j>w_k>w_l$ where $i<k<j<l$. This implies that either a $4132$ or $3142$ pattern exists.

\end{proof}

\begin{lemma} The Schubert polynomial $S_w$ can be expressed as the product of elementary symmetric polynomials if \begin{enumerate}
    \item the permutation satifies the diagonal seperation property and
    \item its Lehmer code never increases by more than $1$
\end{enumerate}
\end{lemma}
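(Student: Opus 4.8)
The plan is to read the factorization off the bottom pipe dream by induction on the number of its columns. Write $P_0$ for the bottom pipe dream of $w$ and let $\mathcal C_1,\dots,\mathcal C_r$ be its columns in the sense of the Definition above, ordered left to right and, within a grid column, from top to bottom; say $\mathcal C_t$ lies in grid column $g_t$ and occupies rows $a_t,\dots,b_t$, so its height is $k_t=b_t-a_t+1$. Since a column cannot run off the top of the grid we have $b_t\ge k_t$, so $e_{k_t}(x_1,\dots,x_{b_t})$ makes sense; and this elementary symmetric polynomial is exactly $\mathfrak S_{v_t}$ for the Grassmannian permutation $v_t$ with Lehmer code $(0^{\,b_t-k_t},1^{\,k_t})$, whose own bottom pipe dream is a single height-$k_t$ column of crossings in grid column $1$ on rows $a_t,\dots,b_t$. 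The claim to prove is then
\[
\mathfrak S_w=\prod_{t=1}^{r} e_{k_t}(x_1,\dots,x_{b_t})=\prod_{t=1}^{r}\mathfrak S_{v_t}.
\]
As a first consistency check, the product of the bottom-pipe-dream (reverse-lex leading) monomials of the $\mathfrak S_{v_t}$ is $\prod_t x_{a_t}x_{a_t+1}\cdots x_{b_t}=\prod_i x_i^{\ell_i}=x^{L(w)}$, since the columns partition the crossings of $P_0$ and $\ell_i$ counts the columns meeting row $i$; as the reverse-lex order is multiplicative, both sides already agree in leading term and degree.

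For the induction, the cases $r\le 1$ are the standard fact that the Grassmannian permutation with code $(0^{b-k},1^k)$ has Schubert polynomial $e_k(x_1,\dots,x_b)$. For $r\ge 2$, peel off the rightmost column $\mathcal C_r$. Being rightmost, its crossings are the last crossing in each of their rows, so $P_0\setminus\mathcal C_r$ is again left-justified and hence is the bottom pipe dream of the permutation $w'$ with Lehmer code $\ell'_i=\ell_i-[\,a_r\le i\le b_r\,]$. A short computation using the ``column starts/ends'' inequalities $\ell_{a_r-1}<\ell_{a_r}$ and $\ell_{b_r+1}<\ell_{b_r}$ together with $\ell_{i+1}\le\ell_i+1$ shows $\ell'_{i+1}-\ell'_i\le 1$ for all $i$, so $w'$ still has slowly-increasing code; and since $P_0\setminus\mathcal C_r$ differs from $P_0$ only by deleting one outermost column, $w'$ still satisfies the diagonal separation property (this last point needs a small argument, since pipe dreams of $w'$ are not literally sub-diagrams of those of $w$). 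Thus $w'$ meets both hypotheses and has $r-1$ columns, so by induction $\mathfrak S_{w'}=\prod_{t<r}e_{k_t}(x_1,\dots,x_{b_t})$, and the lemma reduces to the single-column split
\[
\mathfrak S_w=e_{k_r}(x_1,\dots,x_{b_r})\cdot\mathfrak S_{w'}.
\]

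I would prove this split by a weight-preserving bijection
\[
\mathrm{PD}(w)\ \longleftrightarrow\ \mathrm{PD}(v_r)\times\mathrm{PD}(w'),
\]
sending a pipe dream $P$ of $w$ to the pair $(Q_r,P')$, where $Q_r$ consists of the crossings of $P$ descended (under ladder moves from $P_0$) from $\mathcal C_r$, translated left by $g_r-1$ columns so as to become a filling of $v_r$'s staircase, and $P'$ consists of all the remaining crossings. Because these two sets of crossings partition those of $P$ and the translation does not change row indices, $\mathrm{wt}(P)=\mathrm{wt}(Q_r)\cdot\mathrm{wt}(P')$ is automatic, and summing over pipe dreams yields the split. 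The substance lies in showing the assignment is well defined and bijective, and this is where the hypotheses are spent. The diagonal separation property does the geometric work: it forces the region of the grid that $\mathcal C_r$'s crossings can ever reach to stay disjoint from — and its wires non-interfering with — the region occupied by the other columns, so that in every pipe dream of $w$ the crossings split cleanly, $Q_r$ is an honest pipe dream of $v_r$, and $P'$ an honest pipe dream of $w'$. The slowly-increasing-code condition $\ell_{i+1}\le\ell_i+1$ guarantees that $\mathcal C_r$'s crossings are never blocked from above, so that the $\mathcal C_r$-part realizes all of $\mathrm{PD}(v_r)$ — equivalently, contributes precisely the full $e_{k_r}(x_1,\dots,x_{b_r})$ rather than a proper subsum.

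The step I expect to be the real obstacle is exactly this single-column split, and within it the surjectivity half: that overlaying an \emph{arbitrary} $Q_r\in\mathrm{PD}(v_r)$ (translated back to grid column $g_r$) onto an \emph{arbitrary} $P'\in\mathrm{PD}(w')$ always yields a legitimate pipe dream of $w$. Ruling out colliding crossings is the easy part and follows from diagonal separation. The delicate part is the global ``any two wires cross at most once'' condition: a wire running through the $\mathcal C_r$-block and a wire running through the rest thread through interleaved cells, and one must rule out a double crossing between such a pair. I expect this to require an argument in the spirit of the proof of the diagonal separation lemma — showing that a hypothetical double crossing after the overlay would force one of the forbidden patterns $3142$ or $4132$ back into $w$ (or, via the first lemma, a jump of size $\ge 2$ in the code of $w$, hence a $1423$ or $1432$), contradicting the hypotheses. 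A secondary technical point, flagged above, is verifying that the peeled permutation $w'$ still has the diagonal separation property, which is what keeps the induction running.
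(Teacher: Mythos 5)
You follow essentially the same route as the paper: group the crossings of the bottom pipe dream into columns and argue that, under the two hypotheses, the columns slide independently under ladder moves, each contributing an elementary symmetric factor $e_{k_t}(x_1,\dots,x_{b_t})$; your induction plus overlay bijection is just a more explicit organization of this. The two points you flag as open --- surjectivity of the overlay (ruling out a double crossing between a column wire and an outside wire) and that the peeled permutation $w'$ inherits the diagonal separation property --- are exactly what the paper compresses into the unproved assertion that ``the columns never interact with each other as they slide,'' so your version is, if anything, more detailed than the paper's own proof.
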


\begin{proof}
Begin with the bottom pipe dream and group the crosses into columns per defintion 2.2. 

From the diagonal seperation property and the Lehmer code never increasing by more than $1$, we have that the columns never interact with each other as they slide. As singular columns do not interact with each other and the Schubert polynomials resulting from pipe dreams of columns are elementary symmetric polynomials, the resulting Schubert polynomial will be the product of each of these Schubert polynomials.

\end{proof}

So, we have proved one direction of the conjecture by combining Lemmas 2.1, 2.3 and 2.4. 

\begin{figure}
    \centering
    \begin{tikzpicture}
    \draw (0,0) rectangle (6,6);
    \foreach \x in {1,...,5} \draw ( \x ,0) -- ( \x ,6);
    \foreach \y in {1,...,5} \draw (0, \y ) -- (6, \y);
    \node[text=red] at (0.5,5.5) {+};
    \node[text=red] at (1.5,5.5) {+};
    \node[text=red] at (2.5,5.5) {+};
    \node[text=red] at (0.5,4.5) {+};
    \node[text=red] at (1.5,4.5) {+};
    \node[text=blue] at (0.5,1.5) {+};
    \end{tikzpicture}
    \caption{Pipe Dream of the permutation 431265}
    \label{fig:placeholder}
\end{figure}
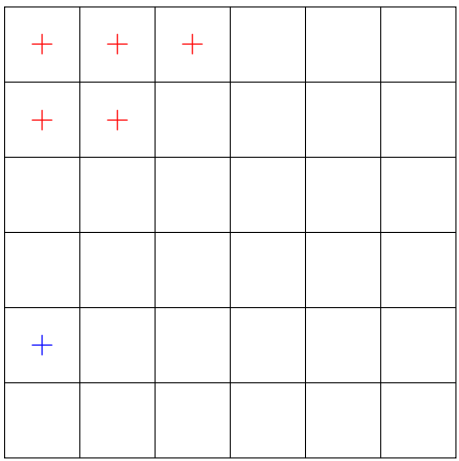

\section{Progress towards the second direction}

\begin{definition}
The \textbf{bottom pipe dream} of $w$ is the unique left-justified pipe dream for $w$ where the number of crossings in row $i$ is $L(i)$. (Here, left-justified means that every row is a string of crossings followed by a string of elbows).
\end{definition}
\begin{definition}
The \textbf{top pipe dream} of $w$ is the unique top-justified pipe dream for $w$ where the number of crossings in row $i$ is $L(i)$. (Here, top-justified means that every column read top to bottom is a string of crossings followed by a string of elbows). 
\end{definition}

\begin{lemma} 
The Schubert polynomial of a permutation $w$ whose Lehmer code corresponds to a rectangular block
\[
L(w) = ( \underbrace{0,\dots,0}_{A}, \underbrace{N,\dots,N}_{B}, \underbrace{0,\dots,0}_{\ast} )
\]
cannot be written as a product of elementary symmetric polynomials.
\end{lemma}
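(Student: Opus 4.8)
The plan is to recognize the permutation $w$ as Grassmannian, so that $\mathfrak S_w$ is a \emph{rectangular} Schur polynomial, and then to show that no such polynomial is a product of elementary symmetric polynomials. (For the statement to hold one needs $A\ge 1$, $B\ge 1$ and $N\ge 2$, which I assume: if $A=0$ then $w$ is dominant and $\mathfrak S_w=e_B(x_1,\dots,x_B)^N$, and if $N=1$ then $\mathfrak S_w=e_B(x_1,\dots,x_{A+B})$ on the nose, so those cases should be excluded.) Since the code $L(w)=(0^A,N^B,0^{\ast})$ has a unique descent, at position $d:=A+B$, the descent criterion recalled in the introduction shows that $w$ is Grassmannian with descent $d$; sorting its code into weakly decreasing order produces the $B\times N$ rectangle, so $\mathfrak S_w=s_{(N^B)}(x_1,\dots,x_d)$. (This can also be read off pipe dreams: the bottom pipe dream of $w$ is a solid $B\times N$ block of crosses, and the pipe dreams reachable from it by ladder moves are in weight-preserving bijection with semistandard tableaux of shape $(N^B)$.) I will use three features of $\mathfrak S_w=s_{(N^B)}(x_1,\dots,x_d)$: it is symmetric in $x_1,\dots,x_d$; it involves exactly the variables $x_1,\dots,x_d$; and $\deg_{x_j}\mathfrak S_w=N$ for each $j\le d$, since a fixed letter occupies at most one cell in each of the $N$ columns of a tableau of shape $(N^B)$, this bound is attained, and it is uniform in $j$ by symmetry.

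First I would pin down the shape of any hypothetical factorization $\mathfrak S_w=\prod_{t=1}^{r}e_{b_t}(x_1,\dots,x_{a_t})$, where every factor is nonzero, so $1\le b_t\le a_t$. Because $\deg_{x_j}e_{b}(x_1,\dots,x_a)$ equals $1$ when $j\le a$ and $0$ otherwise, we get $\deg_{x_j}\mathfrak S_w=\#\{t:a_t\ge j\}$; comparing with the degrees just recorded, the choice $j>d$ forces $a_t\le d$ for all $t$, the choice $j=1$ forces $r=N$, and then the choice $j=d$ forces $a_t=d$ for all $t$. Hence $\mathfrak S_w=\prod_{t=1}^{N}e_{b_t}(x_1,\dots,x_d)$ with $\sum_t b_t=\deg\mathfrak S_w=NB$. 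Now pass to leading monomials in the reverse-lexicographic order of the introduction, for which the leading monomial of $\mathfrak S_w$ is $x^{L(w)}=\prod_{j=A+1}^{A+B}x_j^{\,N}$; since the leading monomial of $e_{b}(x_1,\dots,x_d)$ is $x_d x_{d-1}\cdots x_{d-b+1}$ and the leading monomial of a product is the product of the leading monomials, the exponent of $x_{A+1}$ on the right is $\#\{t:b_t\ge B\}$, which must equal $N=r$, while the exponent of $x_A$ is $\#\{t:b_t\ge B+1\}$, which must equal $0$. So $b_t=B$ for every $t$, that is, $\mathfrak S_w=e_B(x_1,\dots,x_{A+B})^{\,N}$.

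It then remains to contradict the identity $s_{(N^B)}(x_1,\dots,x_{A+B})=e_B(x_1,\dots,x_{A+B})^{N}$, for which a single monomial suffices. I would use $m=x_1^{N}\cdots x_{B-1}^{N}\,x_B^{\,N-1}\,x_{B+1}$, legitimate since $B+1\le A+B$. Its coefficient in $s_{(N^B)}$ is the Kostka number $K_{(N^B),(N^{B-1},N-1,1)}$, and a short tableau count gives $1$: the $N$ copies of each letter $i<B$ are forced into row $i$, leaving a unique way to fill row $B$. Its coefficient in $e_B(x_1,\dots,x_{A+B})^{N}$ counts ordered $N$-tuples of $B$-element subsets of $\{1,\dots,A+B\}$ whose multiset union has exponent vector $m$; each such subset must be $\{1,\dots,B-1\}$ together with one element of $\{B,B+1\}$, exactly one of the $N$ choices being $B+1$, so the count is $\binom{N}{1}=N$. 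As $N\ge 2$, this is the desired contradiction. (Equivalently, $e_B=s_{(1^B)}$, and iterated Pieri shows $e_B(x_1,\dots,x_{A+B})^{N}$ contains $s_{(N^{B-1},N-1,1)}$ with positive coefficient in addition to $s_{(N^B)}$, hence is not a single Schur polynomial.)

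The part I expect to be the real obstacle is the middle step: ruling out factorizations that are ``spread out'' across genuinely different, smaller sets of variables. The single-variable degree count disposes of the number-of-variables issue cleanly, but the leading-monomial argument is delicate because it relies on using precisely the term order under which $x^{L(w)}$ is leading; with a slightly different ``reverse-lexicographic'' convention the computations of the $x_A$- and $x_{A+1}$-exponents would come out differently, so that point needs care. The identification $\mathfrak S_w=s_{(N^B)}(x_1,\dots,x_d)$ should also be pinned down carefully (or replaced by the direct pipe-dream description above), after which the closing coefficient comparison is routine.
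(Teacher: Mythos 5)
Your proof is correct, and while it shares the paper's overall skeleton---first force any hypothetical factorization to be $e_B(x_1,\dots,x_{A+B})^N$, then derive a contradiction---it executes both halves by genuinely different and tighter means. For the first half, the paper compares the extreme monomials of the top and bottom pipe dreams ($x_1^N\cdots x_B^N$ and $x_{A+1}^N\cdots x_{A+B}^N$) and argues divisibility of each factor by the corresponding squarefree monomials; you instead count $\deg_{x_j}$ of each factor to pin down $r=N$ and $a_t=A+B$, and then use the reverse-lex leading monomial $x^{L(w)}$ (the paper's own stated fact, and essentially its bottom-pipe-dream monomial) to force every $b_t=B$ --- a cleaner route to the same intermediate conclusion, though, as you note, one must fix the term-order convention so that $x^{L(w)}$ really is leading. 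The decisive difference is the endgame: the paper's contradiction rests on an informal count (``since the columns are not independent in a rectangle there will be strictly less monomials''), which is not made precise, whereas you identify $\mathfrak{S}_w$ with the rectangular Schur polynomial $s_{(N^B)}(x_1,\dots,x_{A+B})$ via the single descent at $A+B$ and compare one explicit coefficient, that of $x_1^N\cdots x_{B-1}^N x_B^{N-1}x_{B+1}$, which is the Kostka number $K_{(N^B),(N^{B-1},N-1,1)}=1$ on one side and $\binom{N}{1}=N$ on the other; this (or your Pieri-rule variant) closes the argument rigorously. Your explicit hypotheses $A\ge 1$ and $N\ge 2$ are also a genuine improvement: as stated the lemma fails for $A=0$ (dominant case, $\mathfrak{S}_w=e_B(x_1,\dots,x_B)^N$) and for $N=1$ ($\mathfrak{S}_w=e_B(x_1,\dots,x_{A+B})$), restrictions the paper leaves implicit, and your $A\ge1$ assumption is exactly what makes the test monomial (and the variable $x_{B+1}$) available.
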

\begin{proof} 
We argue by contradiction. Suppose
\[
\mathfrak S_{w} \;=\; f_1 f_2 \cdots f_r
\]
with each $f_i$ an elementary symmetric polynomial.

In the top pipe dream, the highest monomial is
$M_{top} = x_1^Nx_2^N\dots x_{B}^N$ . In the bottom pipe dream, the resulting monomial is $M_{bot} = x_{A}^N x_{A+1}^N \cdots x_{A+B}^N$ . 

Since each elementary symmetric polynomial can contribute at most one $x_1$ to a resulting monomial in the product in order for the largest term to be $x_1^N \,x_{B}^N$ there must be exactly $N$ terms. 

From the above, each factor $f_i$ must simultaneously:  contain the monomial $x_1x_2\cdots x_B$ (to allow $M_{\mathrm{top}}$) and  contain the monomial $x_Ax_{A+1}\cdots x_{A+B}$ (to allow $M_{\mathrm{bot}}$). 

But if a factor has degree $<B$, then its monomials cannot contain all of $x_1, x_2, \dots,x_B$ simultaneously. That would make it impossible to form $M_{top}$, which needs all of them present in every factor-choice. So every factor must have degree at least $B$. The total degree of this Schubert polynomial must be $B\cdot N$ (the total number of boxes) as this is a constant quantity. Since no polynomial in the product can have a degree of less, they all must have a degree of $B$.
Thus this forces:
$r = N \quad \text{and} \quad \deg(f_i) = B \;\; \text{for all $i$}.$

Given Step~2, each factor must be some $e_B(\,\cdot\,)$. 
But the bottom monomial requires that in its largest term we can get 
\[
x_A x_{A+1} \cdots x_{A+B},
\]
while the top requires 
\[
x_1 x_2 \cdots x_B.
\]
The only way both can happen is if each factor is
\[
f_i \;=\; e_B(x_1, \dots, x_{A+B}).
\]
This will result in $B\cdot N$ monomials. 

However since the columns are not independent in a rectangle there will be strictly less monomials in the product than $B\cdot N$. Therefore we have a contradiction and the Schubert polynomial $\mathfrak S_w$ cannot be expressed as the product of elementary symmetric polynomials.

\end{proof}
\bigskip

\nocite{*}
\bibliographystyle{amsplain}   
\bibliography{references}

@article{woodruff2025singleSEM,
  author    = {Dora Woodruff},
  title     = {Single-SEM Schubert Polynomials},
  journal   = {arXiv preprint arXiv:2503.03903},
  year      = {2025},
}

@article{hatam2021determinantalSEM,
  author    = {Hassan Hatam and Joseph Johnson and Ricky Ini Liu and Maria Macaulay},
  title     = {Determinantal formulas for SEM expansions of Schubert polynomials},
  journal   = {Annals of Combinatorics},
  volume    = {25},
  number    = {4},
  pages     = {1049--1074},
  year      = {2021},
  doi       = {10.1007/s00026-021-00558-z}
}

@article{gao2021principalSpecializations,
  author    = {Yibo Gao},
  title     = {Principal specializations of Schubert polynomials and pattern containment},
  journal   = {European Journal of Combinatorics},
  volume    = {94},
  pages     = {103291},
  year      = {2021},
  doi       = {10.1016/j.ejc.2020.103291}
}

@article{citation-key,
    author = {Zhicong Lin},
    title = {On $\gamma$-positive polynomials arising in Pattern Avoidance},
    journal = {Advances in Applied Mathematics},
    volume    = {82},
    year      = {2017},
}

\end{document}